\documentclass{article}
\usepackage{graphicx} 
\usepackage{mathalfa}
\usepackage{mathrsfs}
\usepackage{amsmath,amsthm,enumerate}
\usepackage{indentfirst,graphicx,epsfig}

\input{epsf}

\usepackage{epstopdf}
\usepackage{tikz,pgf}
\usepackage{subfig}
\usepackage{caption}
\usepackage{array}
\usepackage{makecell}
\usepackage{tabularx}
\usepackage{amssymb}

\usetikzlibrary{shapes.geometric} 

\usetikzlibrary{positioning}
\usetikzlibrary{decorations.markings}
\usetikzlibrary{decorations.pathmorphing}
\usetikzlibrary{patterns}
\tikzset{->-/.style={decoration={
  markings,
  mark=at position .5 with {\arrow[scale=0.8]{>}}},postaction={decorate}}}
\tikzset{snake it/.style={decorate, decoration={snake, amplitude=.4mm, segment length=2mm}}}

\setlength{\textwidth}{152mm}
\setlength{\textheight}{230mm}
\setlength{\headheight}{2cm}
\setlength{\topmargin}{0pt}
\setlength{\headsep}{0pt}
\setlength{\oddsidemargin}{0pt}
\setlength{\evensidemargin}{0pt}

\parskip=3pt

\voffset -25mm \rm

\newtheorem{thm}{Theorem}[section]
\newtheorem{df}[thm]{Definition}

\newtheorem{cor}[thm]{Corollary}

\newtheorem{lem}[thm]{Lemma}
\newtheorem{pro}[thm]{Proposition}
\newtheorem{question}[thm]{Question}

\title{\textbf{Cooperative colorings of hypergraphs}}
\author{Xuqing Bai$^1$\footnote{Supported by the Fundamental Research Funds for the Central Universities (No.\,ZYTS24069).} \quad\quad Bi Li$^1$\thanks{Corresponding author.} \quad\quad Weichan Liu$^2$ \quad\quad Xin Zhang$^1$\footnote{Supported by the Natural Science Basic Research Program of Shaanxi (No.\,2023-JC-YB-001)
and the Fundamental Research Funds for the Central Universities (No.\,ZYTS24076).}\\
{\small 1. School of Mathematics and Statistics, Xidian University, Xi'an, 710071, China}\\
{\small $\{$baixuqing, libi, xzhang$\}$@xidian.edu.cn}\\
{\small 2. School of Mathematics, Shandong University, Ji'nan, 250100, China}\\
   {\small wcliu@sdu.edu.cn}\\ 
}
\date{}

\begin{document}

\maketitle
\begin{abstract}\baselineskip 0.50cm
    Given a class $\mathcal{H}$ of $m$ hypergraphs ${H}_1, {H}_2, \ldots, {H}_m$ with the same vertex set $V$, a cooperative coloring of them is a partition $\{I_1, I_2, \ldots, I_m\}$ of $V$ in such a way that each $I_i$ is an independent set in ${H}_i$ for $1\leq i\leq m$. The cooperative chromatic number of a class $\mathcal{H}$ is the smallest number of hypergraphs from $\mathcal{H}$ that always possess a cooperative coloring. For the classes of $k$-uniform tight cycles, $k$-uniform loose cycles, $k$-uniform tight paths, and $k$-uniform loose paths, we find that their cooperative chromatic numbers are all exactly two utilizing a new proved set system partition theorem, which also has its independent interests and offers a broader perspective. For the class of $k$-partite $k$-uniform hypergraphs with sufficient large maximum degree $d$, 
    we prove that its cooperative chromatic number has lower bound $\Omega(\log_k d)$ and upper bound 
    $\text{O}\left(\frac{d}{\ln d}\right)^{\frac{1}{k-1}}$.
\end{abstract}

\noindent\textbf{Keywords:} cooperative coloring; chromatic number; set partition; multipartite hypergraphs.

\noindent\textbf{AMS subject classification 2020:} 05C15, 05C65, 60C05.

\baselineskip 0.50cm

\section{Introduction}
A {\it hypergraph} is a pair $(V,E)$ where $V$ is a set whose elements are called vertices, and $E$ is a
 family of subsets of $V$ called edges. A hypergraph is {\it $k$-uniform} if every edge contains exactly $k$ vertices.  
A \textit{proper coloring} of a hypergraph involves assigning colors to its vertices in such a way that no edge in the hypergraph becomes monochromatic; meaning, no edge contains vertices of the same color.
A vertex subset $I$ of a hypergraph $H = (V, E)$ is defined to be an \textit{independent set} if it does not fully contain any edge $e\in E$. In simpler terms, an independent set in a hypergraph is a collection of vertices such that no edge in the hypergraph comprises solely of vertices from that set.  

A \textit{cooperative coloring} in a family $G_1, G_2, \ldots, G_m$ (not necessarily distinct) of graphs that all share the same vertex set $V$ is defined as a process of selecting one independent set $I_i$ from each graph $G_i$ for every $i \in [m]: = \{1, 2, \ldots, m\}$, in such a way that the union of all these independent sets covers the entire vertex set $V$, i.e.,    
$\bigcup_{i=1}^{m} I_i = V$.
The notion of cooperative coloring was initially introduced by Aharoni \textit{et al}.\,\cite{AHHS}  
and has since garnered significant attention and extensive research, with numerous studies conducted on the topic
\cite{ABCHJ,AHHS,BLXZ,bartnicki2023cooperativecoloringmatroids,B,BM}.
In this paper, we broaden the scope of the notion of cooperative coloring by generalizing it to hypergraphs.

\begin{df}
Given a family ${H}_1, {H}_2, \ldots, {H}_m$ (not necessarily distinct) of hypergraphs with the same vertex set $V$, a cooperative coloring of them is a partition $\{I_1, I_2, \ldots, I_m\}$ of $V$ such that $I_i$ is an independent set in ${H}_i$ for each $i\in [m]$.  
\end{df}

The notion of cooperative coloring in a family of hypergraphs can be applied in various domains. The key idea is that by partitioning the vertex set in a way that respects the independence structure of each hypergraph, one can identify useful subsets of vertices that can be treated separately or together in a manner that satisfies the constraints or requirements imposed by the hypergraphs.
For example, in data mining and machine learning, a hypergraph can represent complex relationships among data points. A cooperative coloring of a family of hypergraphs, each capturing a different aspect of the data, can be used to partition the data into clusters that are coherent within each hypergraph's representation but also provide a balanced view across all representations.
In cooperative game theory, where players form coalitions to achieve shared goals, a hypergraph can represent the possible coalitions among players. A cooperative coloring of a family of hypergraphs, each capturing different constraints or preferences among players, can be used to find stable partitions of players into coalitions that satisfy all relevant constraints and preferences.

Certainly, the concept of cooperative coloring in a family of hypergraphs represents an extension of the notion of proper coloring within a single hypergraph. In the special case where all $H_i$ ($i \in [k]$) within the family are identical, being the same hypergraph $H$, a cooperative coloring of this family simply reduces to a proper coloring of $H$ using $k$ distinct colors. This means that each of the $k$ color classes, or independent sets, forms a partition of the vertices of $H$ such that no hyperedge within $H$ consists of vertices of the same color class.

The renowned Brooks' theorem establishes a fundamental fact that any graph having a maximum degree of $d$ admits a proper  coloring using at most $d+1$ colors.  
This result has been extended to hypergraph by Kostochka, Stiebitz, and Wirth \cite{zbMATH01019041}, demonstrating that 
$d+1$ colors are sufficient to properly color any hypergraph with maximum degree of at most $d$.
It is natural to explore degree conditions that ensure the existence of cooperative colorings, akin to how the maximum degree of a (hyper)graph serves to bound its cooperative chromatic number (we formalize it later). In this framework, Aharoni \textit{et al}.\,\cite{AHHS} defined the parameter $m(d)$ as the minimum number of graphs that share the same vertex set and have a maximum degree of at most $d$, all of which inherently possess a cooperative coloring.
By utilizing the fundamental theorems on independent transversals established by Haxell \cite[Theorem 2]{H}, it can be shown that any collection of $2d$ graphs, where each graph has a maximum degree of $d$, can always have a cooperative coloring, meaning $m(d)\leq 2d$. Analogously, we have the following proposition for hypergraphs.

\begin{pro}\label{proposition}
Every $2d$ hypergraphs that share the same vertex set and have a maximum degree of at most $d$ inherently possesses a cooperative coloring.
\end{pro}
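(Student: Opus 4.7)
The plan is to reduce this hypergraph statement directly to the graph version $m(d) \leq 2d$ mentioned just above (which itself follows from Haxell's theorem on independent transversals). The key observation is that ``$I$ is independent in a hypergraph'' is a strictly weaker requirement than ``$I$ is independent in a graph'': it only forbids $e \subseteq I$ for each hyperedge $e$, and to rule this out it suffices to designate any two specific vertices of $e$ as being forbidden to coexist in $I$.

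Concretely, from each $H_i$ I would build a graph $G_i$ on the same vertex set $V$ as follows: for every hyperedge $e \in E(H_i)$ (which has at least two vertices), pick an arbitrary pair $\{u_e, v_e\} \subseteq e$ of distinct vertices and declare $\{u_e, v_e\}$ to be an edge of $G_i$. For a fixed vertex $v \in V$, every edge of $G_i$ incident to $v$ arises from a distinct hyperedge of $H_i$ containing $v$, so the degree of $v$ in $G_i$ is bounded by the $H_i$-degree of $v$, hence by $d$. Thus $\Delta(G_i) \leq d$ for every $i \in [2d]$.

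Now apply the graph-case bound $m(d) \leq 2d$ to $G_1, \ldots, G_{2d}$: there is a partition $\{I_1, \ldots, I_{2d}\}$ of $V$ such that each $I_i$ is independent in $G_i$. For any $i \in [2d]$ and any $e \in E(H_i)$, the chosen pair $\{u_e, v_e\} \subseteq e$ forms an edge of $G_i$, so $I_i$ cannot contain both endpoints of this edge; in particular $e \not\subseteq I_i$. Hence every $I_i$ is independent in $H_i$, giving the desired cooperative coloring of $H_1, \ldots, H_{2d}$.

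There is essentially no serious obstacle here: the reduction is routine once one notices that independence in a hypergraph is inherited from independence in any ``two-vertex shadow'' graph obtained by picking a pair from each hyperedge, and that this shadow automatically preserves the maximum-degree bound. The only mild caveat is the standard convention that hyperedges contain at least two vertices, so that the pair $\{u_e, v_e\}$ can be chosen; degenerate singleton edges (which would force their unique vertex to be colored differently from $i$) would need to be handled separately, but under the usual definition of a hypergraph this case does not arise.
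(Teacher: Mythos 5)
Your proof is correct and follows essentially the same route as the paper: from each hypergraph $H_i$ you form a graph $G_i$ by selecting two vertices of each hyperedge, note that the maximum degree bound $d$ is preserved, apply the graph result $m(d)\leq 2d$ to obtain the partition, and observe that independence in $G_i$ implies independence in $H_i$. No gaps; the degree-preservation remark and the caveat about hyperedge size are exactly the points the paper's argument also relies on.
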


In actuality, let $\mathcal{H}$ represent a collection of $2d$ hypergraphs, specifically $\{H_1, H_2, \ldots, H_{2d}\}$, where each hypergraph shares the same vertex set $V$ and has a maximum degree of at most $d$. Subsequently, we  construct a family of graphs, denoted as $G_1, G_2, \ldots, G_{2d}$, with the identical vertex set $V$ and a maximum degree also bounded above by $d$. This construction is based on the given family of hypergraphs $\mathcal{H}$, and the process is outlined as follows.

Given any $H_i$ with $i \in [2d]$, let $\ell_i = |E(H_i)|$. For each hyperedge $e_j\in E(H_i)$, where $j \in [\ell_i]$, we proceed to construct a corresponding edge $e_j'$ in the graph $G_i$. This construction involves selecting two distinct vertices from the set of vertices that constitute the hyperedge $e_j$. By repeating this process for all hyperedges in $H_i$, we obtain a simple graph $G_i$ whose edge set $E(G_i)$ is the union of all the constructed edges $e_j'$ for $j \in [\ell_i]$, and whose vertex set $V(G_i)$ remains the same as that of the hypergraphs, denoted as $V$. Note that in $G_i$ there may exist some isolated vertices, but we are not concerned about this. It is evident that each $G_i$ has maximum degree of at most $d$.

Since $m(d)\leq 2d$, the family $G_1, G_2, \dots, G_{2d}$ of graphs possesses a cooperative coloring.
This means there is a partition $\{I_1,I_2,\ldots,I_{2d}\}$ of $V$ such that each $I_i$ is an independent set of $G_i$. Now, no edge of $H_i$ would be fully contained in  $I_i$, as otherwise the two vertices of some edge $e_j'$ of $G_i$ would be contained in $I_i$, contradicting the fact that $I_i$ is independent in $G_i$. Therefore, 
$\{I_1,I_2,\ldots,I_{2d}\}$ becomes a partition of $V$ such that each $I_i$ is an independent set of $H_i$.

In the realm of graph coloring problems, researchers often focus on exploring the coloring properties within a distinct class of graphs. Consequently, we are able to restrict the cooperative coloring to specific classes of hypergraphs. Formally, we define the following:

\begin{df}
For a given class of hypergraphs $\mathcal{H}$, the cooperative chromatic number of $\mathcal{H}$, denoted as 
$\chi_{{\rm co}}(\mathcal{H})$, is the least number of hypergraphs from $\mathcal{H}$ that always have a cooperative coloring. 
\end{df}
\noindent \textbf{Remark:} If each hypergraph form $\mathcal{H}$ has a maximum degree of at most $d$, then we 
adopt $m_\mathcal{H}(d)$ as an alternative representation to $\chi_{{\rm co}}(\mathcal{H})$, just following an analogous notion of Aharoni \textit{et al}.\,\cite{ABCHJ}.

For (graph) cycles and paths, Aharoni \textit{et al}.\,\cite{AHHS} showed that three cycles on the same vertex set possess a cooperative coloring, which by extension, implies that three paths sharing the identical vertex set also possess a cooperative coloring. It is noteworthy to highlight that their proof does not follow a combinatorial approach.
For a given class of bipartite graphs $\mathcal{B}$, Aharoni \textit{et al}.\,\cite{ABCHJ} proved
\[\Omega(\log_2 d) \leq m_{\mathcal{B}}(d) \leq \text{O}\left(\frac{d}{\ln d}\right).\]

In this paper, we extend those results to the corresponding hypergraphs, roughly say, hypercycles, hyperpaths, and multipartite hypergraphs. Denote $\mathbb{Z}_n$ as the set of integers $\{0, 1, 2, \ldots, n-1\}$   
with addition and multiplication defined in such a way that the result is always   
taken modulo $n$.

\begin{df}\label{df:tcycle}
A $k$-uniform tight cycle is a hypergraph with a sequence of $n$ vertices $v_0, v_1, \dots, v_{n-1}$ such that for each $i\in \mathbb{Z}_n$, the set $\{v_i, v_{i+1}, \dots, v_{i+(k-1)}\}$ (with indices taken modulo $n$) forms an edge.
A $k$-uniform loose cycle is a hypergraph with a sequence of edges $e_0, e_1, \dots, e_{m-1}$ such that for each $i \in \mathbb{Z}_m$, $e_i \cap e_{i+1}=v_i$, where $e_{m}=e_0$, and all $v_i$-s are distinct (non-consecutive edges do not share any vertices). A $k$-uniform tight or loose path can be defined using similar logical constructs. 
\end{df}

\begin{df}
Given an integer $k\ge 3$, a $k$-uniform hypergraph is $k$-partite if its vertex set $V$ can be partitioned into $k$ disjoint subsets $V_1, V_2, \ldots, V_k$
  such that $V=V_1\cup V_2\ldots \cup V_k$, and every edge intersects each subset $V_i$ in exactly one vertex for all $i\in [k]$. Furthermore, this hypergraph is a complete $k$-partite $k$-uniform hypergraph if, for any selection of vertices $ v_i\in V_i$ for $i\in [k]$, the set $\{v_1,v_2,\ldots, v_k\}$ forms an edge.
\end{df}

The rest of the paper is organized as follows.
In Section \ref{sec:2}, we show that
$\chi_{{\rm co}}(\mathcal{G})=2$ if $\mathcal{G}$ represents 
the class of $k$-uniform tight cycles, $k$-uniform loose cycles, $k$-uniform tight paths, or $k$-uniform loose paths, where $k\geq 3$.
Instead of proving them separately, we prove a conclusion about set system partition problem, which can be used to deduce any of these results. Moreover, this conclusion itself also has independent interest.
In Section \ref{sec:3}, we move our attention to the the class $\mathcal{B}_k$ of $k$-uniform $k$-partite hypergraphs, proving that
\[\Omega(\log_k d)\le m_{\mathcal{B}_k}(d)\le \text{O}\left(\frac{d}{\ln d}\right)^{\frac{1}{k-1}}.\]
In Section \ref{sec:4}, we wrap up the paper by highlighting some interesting open questions that are worth pursuing further.

\section{Hypercycles and hyperpaths} \label{sec:2}

The following is the main result of this section.

\begin{thm}\label{thm:2partition}
Let $a_0,a_1,\ldots,a_{n-1}$ be a permutation of $\mathbb{Z}_n$.
Define $N_i=\{ i,i+1,\dots, i+k_i-1\}$ and $A_i=\{a_i, a_{i+1}, \dots, a_{i+j_i-1}\}$
for each $i\in \mathbb{Z}_n$, where all indices are taken modulo $n$ to ensure they remain within $\mathbb{Z}_n$. Furthermore, suppose that the size of each set $N_i$ and $A_i$ is at least 3, with the exception that $|N_0|$ and $|A_0|$ are at least 2.
Then, there exists a partition $\{B,R\}$ of $\mathbb{Z}_n$ such that no set $N_i$ is fully contained within $B$ and no set $A_i$ is fully contained within $R$ for any $i\in\mathbb{Z}_n$.
\end{thm}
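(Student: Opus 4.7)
The plan is to prove the theorem by strong induction on $n$, after first reducing to the worst case. Enlarging any $N_i$ or $A_i$ only loosens the containment condition (if $N_i\not\subseteq B$, this persists when $N_i$ grows), so I may assume throughout that $|N_0|=|A_0|=2$ and $|N_i|=|A_i|=3$ for every $i\neq 0$. The task then becomes: partition $\mathbb{Z}_n=B\cup R$ so that (a) for every $i\neq 0$ the natural arc $\{i,i+1,i+2\}$ meets $R$, and $\{0,1\}$ meets $R$; and (b) for every $i\neq 0$ the permutation arc $\{a_i,a_{i+1},a_{i+2}\}$ meets $B$, and $\{a_0,a_1\}$ meets $B$.

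For small $n$ (say $n\leq 5$) I would verify the theorem by direct construction---the number of distinct permutation patterns is small and each yields a constraint system that can be solved by hand. For the inductive step, I assume the statement for all smaller sizes and try to locate a vertex $v\in \mathbb{Z}_n$ whose deletion, together with a controlled re-indexing of the two cyclic orders, produces an $(n-1)$-instance still satisfying the hypotheses of the theorem. The induction hypothesis then yields a partition of $\mathbb{Z}_n\setminus\{v\}$, which I extend back to $\mathbb{Z}_n$ by coloring $v$ consistently with both constraints involving $v$.

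The main obstacle is that in the minimal-size regime each vertex lies in three natural $3$-arcs and three permutation $3$-arcs, so deleting a vertex naively shrinks all six of these to size $2$, whereas the hypothesis permits only one size-$2$ arc per family. The crux is therefore to choose $v$ (or if necessary a pair of vertices) so that the shrunken arcs can be absorbed into the $N_0$/$A_0$ size-$2$ exception via suitable re-indexing, or merged with adjacent arcs to restore size $\geq 3$. Because the permutation $a_0,\ldots,a_{n-1}$ may be entirely adversarial---scattering the natural neighbors of $v$ widely in the permutation order---establishing the existence of such a $v$ requires a case analysis on the local structure of $a$ near the candidate vertex. As a fallback, one could attempt a direct construction, for instance starting from a coloring by residues modulo $3$ in the natural order (which satisfies (a) by design) and then locally repairing violations of (b); but bounding the propagation of such repairs appears to pose essentially the same combinatorial difficulty.
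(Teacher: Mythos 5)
Your proposal is a plan, not a proof: the inductive step, which is the entire content of the theorem, is left open. You correctly identify the fatal issue yourself. In the minimal regime ($|N_0|=|A_0|=2$, all other sets of size $3$), deleting a vertex $v$ shrinks the three natural arcs $N_{v-2},N_{v-1},N_v$ and the three permutation arcs through $v$ to size $2$, while the hypothesis of the theorem tolerates only one size-$2$ set in each family. So the induction hypothesis simply does not apply to the reduced instance, and the proposed remedies (absorbing the shrunken arcs into the $N_0$/$A_0$ exception by re-indexing, or merging them with neighboring arcs) are not carried out; you state that this ``requires a case analysis on the local structure of $a$ near the candidate vertex'' and stop there. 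Since the permutation can place the natural neighbors of every candidate $v$ arbitrarily, there is no evident choice of $v$ for which the re-indexing trick works, and the fallback (color by residues mod $3$ and repair violations) is acknowledged to face the same difficulty. As written, the argument establishes only the trivial reduction to the worst case and some unspecified small cases.

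For comparison, the paper does not use induction at all. It forms the multigraph on $\mathbb{Z}_n$ whose edges are the two Hamiltonian cycles $C_1$ (natural order, blue) and $C_2$ (permutation order, red), and deletes from each cycle a maximum matching chosen to avoid the edges $\{0,1\}$ and $\{a_0,a_1\}$. When $n$ is even the remaining graph is a disjoint union of even cycles (each vertex keeps exactly one blue and one red edge), and a proper $2$-coloring of it yields $\{B,R\}$: every consecutive triple in either order contains an edge of the surviving graph, hence is bichromatic, and the retained edges $\{0,1\}$, $\{a_0,a_1\}$ handle the two size-$2$ sets. When $n$ is odd, one or two vertices are removed first (chosen according to whether the even-indexed positions of the two orders intersect), the same matching argument is applied to the rest, the removed vertices are assigned colors by hand, and a finite list of exceptional small configurations is checked directly. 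If you want to salvage your approach, note that the paper's odd case is in effect the ``delete a vertex and color it consistently'' idea you describe, but made to work by routing everything through matchings and $2$-colorings of even cycles rather than through the statement of the theorem itself; reproducing that requires supplying exactly the case analysis you deferred.
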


\begin{proof}
Construct a graph $G$ defined by the vertex set $V:=\mathbb{Z}_n$ and an edge set $E$ that comprises all the edges belonging to two cycles: 
the first cycle $C_1$ traversing vertices in the order $0,1,\ldots,n-1$ in a continuous loop, and the second cycle $C_2$ following the sequence of vertices 
$a_0,a_1,\ldots a_{n-1}$ in a similar continuous fashion. Observe that the graph $G$ can potentially be a multigraph.
For the purpose of clarity and simplified description, color the edges of $C_1$ 
blue, and the edges of $C_2$ red. Consequently, every vertex in $G$ is incident with exactly two blue edges (from $C_1$) and two red edges (from $C_2$).
To achieve the desired partition of the vertex set $V$, we utilize a proper  coloring of certain subgraphs of $G$. This approach necessitates the consideration of two distinct cases.

If $n=2\ell$ for some $\ell$, then we proceed by removing a maximum matching from $C_1$, comprising $\ell$ blue edges, none of which is the edge $\{0,1\}$. Similarly, from $C_2$, we delete a maximum matching consisting of $\ell$ red edges, ensuring that the edge $\{a_0,a_1\}$ is not included in this matching.
Denote the resulting graph by $G'$. Then, in $G'$, every vertex is incident with precisely one blue edge and one red edge. Furthermore, the two edges $\{0,1\}$ and
$\{a_0,a_1\}$ remain as part of the edge set of $G'$, i.e., $\{0,1\}, \{a_0,a_1\}\in E(G')$.
Consequently, every connected component of $G'$ is an even cycle, and therefore $G'$ has a proper 2-coloring, which in turn partitions $V$ into two disjoint subsets, denoted by $B$ and $R$.
The partition $\{B,R\}$ of $V$ ensures that no two adjacent vertices in $G'$ are simultaneously contained within either $B$ or $R$.
Since $\{0,1\}\in E(G')$, $N_0 \not\subseteq B$.
For any $i\in \mathbb{Z}_n\setminus \{0\}$, either $\{i,i+1\}$ or $\{i+1,i+2\}$ is an edge of $G'$. This implies
$\{i,i+1,i+2\}\not\subseteq B$, and thus $N_i\not\subseteq B$. By symmetry, we can analogously argue that $A_i\not\subseteq R$ for any $i\in \mathbb{Z}_n$.

If $n=2\ell+1$, then we divide the proof into two subcases according to whether $D:=\{2,4,\dots, 2\ell\}\cap \{a_2, a_4, \dots, a_{2\ell}\}$ is empty or not.

\begin{itemize}
        \item If $D\not=\emptyset$, then choose $v\in D$. Upon removing $v$, the cycle $C_1$ (resp.\,$C_2$) transforms into a path denoted as $P_1$ (resp.\,$P_2$), which comprises $2\ell-1$ edges all colored blue (resp.\,red).
        The maximum matching in $P_1$ (resp.\,$P_2$) is unique and comprises $\ell$ blue (resp.\,red) edges, which we denote by $M_1$ (resp.\,$M_2$).
        Observe that $\{0,1\}\in M_1$ and $\{a_0,a_1\}\in M_2$.
Let $G''$ be the subgraph of $G$ induced by the vertex set $V\setminus \{v\}$ and the edge set $M_1\cup M_2$. 
Since $M_1$ and $M_2$ are two perfect matchings of $G''$, $G''$ can be expressed as a union of even cycles, which implies that $G''$ is 2-colorable.
Assume $v=a_j$ for some $2\le j\le 2l$ and $u=a_{j-1}$, i.e., $u$ is a vertex in $V$ such that $\{u,v\}$ is a red edge (i.e., an edge of $C_2$).
Then $\{a_{j+1},a_{j+2}\}\in M_2$. 
As in the previous case, we obtain a partition $\{B',R'\}$ of $V\setminus \{v\}$ with the property that $N_i\not\subseteq B'$ and $A_i\not\subseteq R'$ for any $i\in \mathbb{Z}_n$, and furthermore, $u\in B'$. Let $B=B'$ and $R=R'\cup \{v\}$. Subsequently, we demonstrate that $\{B, R\}$ constitutes the desired partition of the graph $G$ as outlined below.
It suffices to show that for any set $A\in \{A_2, A_3, \dots, A_n\}$ that includes $v$,  it holds that $A\not\subseteq R$.
If $u\in A$, then $A\not\subseteq R$ as $u\in B$.
If $u\not\in A$, then $\{a_j,a_{j+1},a_{j+2}\}\subseteq A$.
Since $\{a_{j+1},a_{j+2}\}$ is an edge of $M_2$, $a_{j+1}$ and $a_{j+2}$ cannot belong to $B'$ or $R'$ at the same time.
It follows $A\not\subseteq R$.
\item If $D=\emptyset$, then $\{a_2, a_4, \dots, a_{2\ell}\} \subseteq \{0,1,3, 5, \dots, 2\ell-1\}$.
        
    Suppose first that there exists $k\in \{2, 4, \dots, 2\ell-2\}\setminus \{a_0,a_1\}$ such that $k+1\in \{a_2, a_4, \dots, a_{2l}\}\setminus \{0,1\}$.
        Assume $k=a_x$ and $k+1=a_y$. 
        We assume $x<y$, and it is reasonable to infer that the case where $x>y$ can be demonstrated through an analogous approach.
        Since $k$ and $y$ are both even, we can always choose $M_3$ and $M_4$ be maximum matchings of
        $C_1\setminus \{k,k+1\}$ and $C_2\setminus \{a_x,a_y\}$ such that $\{0,1\}, \{k-2,k-1\}\in  M_3$ and $\{a_0,a_1\}, \{a_{y+1},a_{y+2}\}\in M_4$.
        Let $G'''$ be the subgraph of $G$ with edge set $M_3\cup M_4$ and vertex set $V\setminus \{k,k+1\}$.
        Subsequently, each vertex in $G'''$ is adjacent to at most one blue edge and at most one red edge. Analogous to the first case, a partition $\{B'',R''\}$ of $V\setminus \{k,k+1\}$ emerges neither $N_i$ is fully contained within $B''$ nor $A_i$ within $R''$ for any $i\in \mathbb{Z}_n$.
        We set $a_{y-1}\in B''$ if $y-1\neq x$.
        Let $R=R''\cup \{k+1\}$ and $B=B''\cup \{k\}$. We now demonstrate that the pair $\{B,R\}$ constitutes the desired partition of $G$, as follows.

        Since $\{0,1\}\in M_3$ and $\{a_0,a_1\}\in M_4$, it is impossible for 0 and 1 to both belong to $B$ or $R$ simultaneously, and similarly, it is impossible for $a_0$ and $a_1$ to both belong to $B$ or $R$ simultaneously. This implies $N_0\not\subseteq B$ and $A_0\not\subseteq R$.
        Let $A\in \{A_2, A_3, \dots, A_n\}$. If $a_y\not\in A$, then it is clear that $A\not\subseteq R$.
        So we assume $a_y\in A$. Since $a_{y-1}\in B$ (if $y-1=x$, then $a_x\in B$ definitely) and 
        $\{a_{y+1},a_{y+2}\}\in M_4$, we conclude that $\{a_{y-2},a_{y-1},a_y\}, \{a_{y-1},a_{y},a_{y+1}\}, \{a_{y},a_{y+1},a_{y+2}\} \not\subseteq R$. This implies $A\not\subseteq R$.
        Let $N\in \{N_2, N_3, \dots, N_n\}$. If $k\not\in N$, then it is clear that $N\not\subseteq B$.
        If $k\in N$, then $\{k-2,k-1,k\}, \{k-1,k,k+1\}, \{k+1,k,k+2\} \not\subseteq B$ because $\{k-2,k-1\}\in M_3$ and $k+1\in R$.
        This concludes $N\not\subseteq B$. 

        On the other hand, suppose that for each $k\in \{2, 4, \dots, 2\ell-2\}\setminus \{a_0,a_1\}$, we have $k+1\not\in \{a_2, a_4, \dots, a_{2\ell}\}\setminus \{0,1\}$.
        If $\ell=1$ (i.e., $n=3$), then we can trivially find the desired partition $\{B,R\}$, so we assume $\ell\geq 2$ below.
        
        If $0$ or $1$ is missing in $\{a_2, a_4, \dots, a_{2\ell}\}$, then $\{3,5,\ldots, 2\ell-1\}\subseteq \{a_2, a_4, \dots, a_{2l}\}$. 
        If $\ell\geq 4$, then $\{2, 4, \dots, 2\ell-2\}\setminus \{a_0,a_1\}$ is nonempty and contains a $k$ such that $k+1\in \{3,5,\ldots, 2\ell-1\}$, contradicting our assumption.
        Hence $\ell\leq 3$.

        \begin{itemize}
            \item If $\ell=2$, then $2\in \{a_0,a_1\}$. 
        Given $B=\{0,2,3\}$ and $R=\{1,4\}$, we derive a desired partition.
      
       \item If $\ell=3$, then $\{a_0,a_1\}=\{2,4\}$. Let $\{\alpha,\beta\}=\{0,1\}$ such that $\beta\in \{a_2, a_4, a_6\}$. Setting $B=\{\alpha,2,4,6\}$ and $R=\{\beta,3,5\}$, we arrive at a desired partition. 
        \end{itemize}
        
        If some odd $t\geq 3$ is missing in $\{a_2, a_4, \dots, a_{2\ell}\}$, then $\{a_2, a_4, \dots, a_{2\ell}\} = \{0,1,3, 5, \dots, 2\ell-1\}\setminus \{t\}$.
        If $\ell\geq 5$, then $\{2, 4, \dots, 2\ell-2\}\setminus \{a_0,a_1\}$ has at least two elements and contains a $k$ such that $k+1\in \{3,5,\ldots, 2\ell-1\}\setminus \{t\}$, contradicting our assumption.
        Hence $\ell\leq 4$. 
        \begin{itemize}
            \item
        If $\ell=2$, then $\{a_2,a_4\}=\{0,1\}$. By defining  $B=\{0,2,3\}$ and $R=\{1,4\}$, we achieve a desired partition.
        
        \item If $\ell=3$, then $\{a_2,a_4,a_6\}=\{0,1,3\}$ and $2\in \{a_0,a_1\}$, or $\{a_2,a_4,a_6\}=\{0,1,5\}$ and $4\in \{a_0,a_1\}$. 
        In the former case, setting $B=\{2,4,5,6\}\setminus\{\gamma\}$ and $R=\{0,1,3,\gamma\}$, where 
        $\gamma\in \{a_0,a_1\}\setminus \{2\}$ (thus $\gamma\in \{4,5,6\}$), 
        we achieve a desired partition; in the latter case, $B=\{2,3,6\}$ and $R=\{0,1,4,5\}$ give a desired partition.
        
      \item If $\ell=4$ and $t=3$ (resp.\,$5$), then $\{a_2, a_4, a_6, a_{8}\} = \{0,1,5,7\}$ and $\{a_0,a_1\}=\{4,6\}$ (resp.\,$\{a_2, a_4, a_6, a_{8}\} = \{0,1,3,7\}$ and $\{a_0,a_1\}=\{2,6\}$). Setting $B=\{2,3,6,8\}$ and $R=\{0,1,4,5,7\}$ (resp. $B=\{2,4,5,8\}$ and $R=\{0,1,3,6,7\}$ ), we obtain a desired partition.
      \item
      If $\ell=4$ and $t=7$, then $\{a_2, a_4, a_6, a_{8}\} = \{0,1,3,5\}$.
      It follows $\{a_0,a_1\}=\{2,4\}$ and $\{a_3,a_5,a_7\}=\{6,7,8\}$.
      If $3\in \{a_4,a_6\}$, then $B=\{2,3,a_3,a_7\}$ and $R=\{0,1,4,5,a_5\}$ give a desired partition;
      If $3=a_2$ (resp.\,$3=a_8$), then $B=\{2,3,a_5,a_7\}$ and $R=\{0,1,4,5,a_3\}$ (resp.\,$B=\{2,3,a_3,a_5\}$ and $R=\{0,1,4,5,a_7\}$) give a desired partition.
      \end{itemize}
      \end{itemize}
This completes the proof.
\end{proof}

Denote $\mathcal{C}^{{\rm tyt}}_k$, $\mathcal{C}^{{\rm lse}}_k$, $\mathcal{P}^{{\rm tyt}}_k$, and $\mathcal{P}^{{\rm lse}}_k$ as 
the class $k$-uniform tight cycles, loose cycles, tight paths, and loose paths with $k\ge 3$, respectively.
Based on Theorem \ref{thm:2partition}, we have the following.

\begin{cor} \label{cor}
$\chi_{{\rm co}}(\mathcal{C}^{{\rm tyt}}_k)=\chi_{{\rm co}}(\mathcal{C}^{{\rm lse}}_k)=\chi_{{\rm co}}(\mathcal{P}^{{\rm tyt}}_k)=\chi_{{\rm co}}(\mathcal{P}^{{\rm lse}}_k)=2$.
\end{cor}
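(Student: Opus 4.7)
The plan is to derive Corollary \ref{cor} from a single uniform application of Theorem \ref{thm:2partition}, exploiting the flexibility in the sizes $k_i$ and $j_i$. The lower bound is immediate: every hypergraph in each of the four classes contains at least one edge, so the full vertex set cannot be independent and hence $\chi_{{\rm co}}(\mathcal{G}) \geq 2$ for each $\mathcal{G} \in \{\mathcal{C}^{{\rm tyt}}_k, \mathcal{C}^{{\rm lse}}_k, \mathcal{P}^{{\rm tyt}}_k, \mathcal{P}^{{\rm lse}}_k\}$.

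For the upper bound $\chi_{{\rm co}}(\mathcal{G}) \leq 2$, I would fix arbitrary $H_1, H_2 \in \mathcal{G}$ on a common vertex set $V$ with $|V| = n$, and exploit the canonical traversal of each hypergraph: for a (tight or loose) cycle one walks around to list $v_0, v_1, \dots, v_{n-1}$, and for a (tight or loose) path one reads the vertices off from one endpoint. I would identify $V$ with $\mathbb{Z}_n$ via the traversal of $H_1$, so the vertices of $H_1$ are $0, 1, \dots, n-1$ in order; the traversal of $H_2$ then supplies a permutation $a_0, a_1, \dots, a_{n-1}$ of $\mathbb{Z}_n$. I would then invoke Theorem \ref{thm:2partition} with $k_i = j_i = 3$ for every $i \in \mathbb{Z}_n$ (the size hypothesis is trivially met) to obtain a partition $\{B, R\}$ of $\mathbb{Z}_n$ satisfying $\{i, i+1, i+2\} \not\subseteq B$ and $\{a_i, a_{i+1}, a_{i+2}\} \not\subseteq R$ for every $i$.

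The remaining task is to verify that $B$ is independent in $H_1$ and $R$ is independent in $H_2$, which reduces to the simple observation that every edge contains a cyclically consecutive triple in the chosen traversal. For tight cycles and paths this is immediate, since every edge is a $k$-block $\{v_i, v_{i+1}, \dots, v_{i+k-1}\}$ with $k \geq 3$ and its first three positions form such a triple. For loose cycles and paths the edges occur at positions $\{(k-1)j, (k-1)j+1, \dots, (k-1)j+k-1\}$, so again $\{(k-1)j, (k-1)j+1, (k-1)j+2\}$ sits inside. Consequently every edge of $H_1$ will contain some $N_i$ and every edge of $H_2$ will contain some $A_i$, so no edge can lie entirely in $B$ or entirely in $R$, and $\{B, R\}$ is the required cooperative coloring. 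The main things to watch during the verification are the wrap-around of the last edge of a loose cycle (in particular the degenerate case $k = 3$, where $\{n-2, n-1, 0\}$ is literally an edge) and the fact that for paths the theorem also forbids wrap-around triples that need not correspond to any edge, which is merely a harmless strengthening. No substantive obstacle is anticipated beyond this routine index-bookkeeping.
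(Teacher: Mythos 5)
Your proposal is correct and follows essentially the same route as the paper: order the vertices of each hypergraph along its natural traversal so that every hyperedge occupies a consecutive block, and then invoke Theorem \ref{thm:2partition} (your explicit choice $k_i=j_i=3$ and the observation that every edge contains a consecutive triple is just a fleshed-out version of the paper's terse application, and the wrap-around/path caveats you flag are indeed harmless). The trivial lower bound you add is also correct, though the paper leaves it implicit.
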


\begin{proof}
According to the definition of a tight cycle or path, its vertices can be arranged into a sequence such that each of its hyperedges consists of a consecutive subset of vertices in this sequence.
Similarly, for a loose cycle or path, although not necessarily adhering to the strict consecutive property as in tight structures, we can still order its vertices into a sequence where each hyperedge encompasses a consecutive subset of these vertices. 
Therefore, Theorem \ref{thm:2partition} is applicable to conclude the result. 
\end{proof}

\section{Multipartite hypergraphs} \label{sec:3}

\begin{lem}[Symmetric Lov\'asz Local Lemma, \cite{AS}] \label{lll}
    Let $A_1, \ldots, A_n$ be events such that $\mathbb{P}[A_i] \leq p$ for all $i$ and each $A_i$ is independent of all but at most $d$ of the other $A_j$. If $ep(d + 1) \leq 1$, then  
\[  
\mathbb{P}\left( \bigcap_{i=1}^{n} \overline{A_i }\right) > 0.  
\] 
\end{lem}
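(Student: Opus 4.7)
The plan is to establish, by strong induction on $|S|$, the statement that for every $i\in[n]$ and every $S\subseteq[n]\setminus\{i\}$,
\[\mathbb{P}\!\left[A_i \,\Big|\, \bigcap_{j\in S}\overline{A_j}\right]\leq \frac{1}{d+1},\]
with the convention that conditioning on the empty intersection means no conditioning at all. Granting this claim, the chain rule delivers the theorem at once:
\[\mathbb{P}\!\left[\bigcap_{i=1}^{n}\overline{A_i}\right]=\prod_{i=1}^{n}\mathbb{P}\!\left[\overline{A_i}\,\Big|\,\bigcap_{j<i}\overline{A_j}\right]\geq \left(\frac{d}{d+1}\right)^{n}>0.\]

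For the base case $S=\emptyset$, the hypothesis $ep(d+1)\le 1$ yields $\mathbb{P}[A_i]\le p\le 1/(e(d+1))\le 1/(d+1)$. For the inductive step, I would split $S=S_1\sqcup S_2$, where $S_1=S\cap N(i)$ consists of at most $d$ indices of events dependent with $A_i$ and $S_2=S\setminus S_1$ consists of indices of events independent of $A_i$. If $S_1=\emptyset$ the left-hand side reduces to $\mathbb{P}[A_i]$ by independence and the base case applies, so assume $S_1\neq\emptyset$ and write
\[\mathbb{P}\!\left[A_i\,\Big|\,\bigcap_{j\in S}\overline{A_j}\right]=\frac{\mathbb{P}\!\left[A_i\cap\bigcap_{j\in S_1}\overline{A_j}\,\Big|\,\bigcap_{j\in S_2}\overline{A_j}\right]}{\mathbb{P}\!\left[\bigcap_{j\in S_1}\overline{A_j}\,\Big|\,\bigcap_{j\in S_2}\overline{A_j}\right]}.\]
The numerator is at most $\mathbb{P}[A_i\mid\bigcap_{j\in S_2}\overline{A_j}]=\mathbb{P}[A_i]\le p$ by independence of $A_i$ from the events indexed by $S_2$.

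For the denominator, enumerate $S_1=\{j_1,\ldots,j_r\}$ with $r\le d$ and apply the chain rule factor by factor, invoking the induction hypothesis on each strictly smaller conditioning set:
\[\mathbb{P}\!\left[\bigcap_{j\in S_1}\overline{A_j}\,\Big|\,\bigcap_{j\in S_2}\overline{A_j}\right]=\prod_{k=1}^{r}\mathbb{P}\!\left[\overline{A_{j_k}}\,\Big|\,\bigcap_{l<k}\overline{A_{j_l}}\cap\bigcap_{j\in S_2}\overline{A_j}\right]\ge \left(\frac{d}{d+1}\right)^{r}\ge \left(\frac{d}{d+1}\right)^{d}.\]
Combining the two bounds gives
\[\mathbb{P}\!\left[A_i\,\Big|\,\bigcap_{j\in S}\overline{A_j}\right]\le p\left(1+\frac{1}{d}\right)^{d}<pe\le\frac{1}{d+1},\]
where the final inequalities use the standard estimate $(1+1/d)^{d}<e$ and the hypothesis $ep(d+1)\le 1$, closing the induction.

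The only delicate point in the argument is the denominator estimate. A naive union bound gives merely $1-|S_1|/(d+1)\ge 1/(d+1)$, losing a factor of roughly $e$ and failing to close the induction under the stated hypothesis; the multiplicative chain-rule form above is exactly what converts the assumption $ep(d+1)\le 1$ into the clean target bound $1/(d+1)$ needed to perpetuate the induction.
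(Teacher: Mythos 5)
The paper does not prove this lemma at all: it is quoted verbatim as a known result from Alon and Spencer's book \cite{AS}, so there is no in-paper argument to compare against. Your proposal is the standard inductive proof of the symmetric Local Lemma (equivalent to the general LLL specialized to $x_i=1/(d+1)$), and it is essentially correct: the split $S=S_1\sqcup S_2$, the bound $p$ on the numerator, the chain-rule lower bound $\left(\frac{d}{d+1}\right)^{r}$ on the denominator via the induction hypothesis on strictly smaller conditioning sets, and the closing estimate $p\left(1+\frac{1}{d}\right)^{d}<pe\le\frac{1}{d+1}$ all work. Two small points you should make explicit to be fully rigorous: first, the argument needs $A_i$ to be \emph{mutually} independent of the family $\{A_j: j\notin N(i)\}$ (pairwise independence is not enough for the step $\mathbb{P}[A_i\mid\bigcap_{j\in S_2}\overline{A_j}]=\mathbb{P}[A_i]$); this is the intended reading of the hypothesis, but it deserves a sentence. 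Second, the conditional probabilities are only defined if $\mathbb{P}[\bigcap_{j\in S}\overline{A_j}]>0$, which should be carried along as part of the induction (it follows from the same denominator bound, and also handles the degenerate case $d=0$, where $S_1=\emptyset$ always). With those remarks added, your argument is a complete, self-contained proof of the cited lemma, which is more than the paper provides.
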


\begin{thm} \label{bound}
    For the class $\mathcal{B}_k$ of $k$-partite $k$-uniform hypergraphs, and for $d\geq 2$, we have
    \[\frac{\log_k d}{k-1}\le m_{\mathcal{B}_k}(d)\le k(1+\text{o}(1))\left(\frac{(k-1)d}{\ln d}\right)^{\frac{1}{k-1}}.\]
\end{thm}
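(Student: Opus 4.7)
The plan is to prove the upper and lower bounds separately, using a probabilistic construction for the former and an explicit combinatorial one for the latter.

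For the \emph{upper bound}, the strategy is to apply the symmetric Lov\'asz Local Lemma (Lemma \ref{lll}) to a random coloring. Let $m$ be the target number of hypergraphs. Color each vertex $v \in V$ independently and uniformly at random with a color $c(v) \in [m]$, and set $I_i = c^{-1}(i)$. For each hypergraph $H_i$ and each $e \in E(H_i)$, let $A_{e,i}$ be the bad event that $e \subseteq I_i$; then $\mathbb{P}[A_{e,i}] = m^{-k}$, and $A_{e,i}$ is determined by the colors of the $k$ vertices in $e$. Since each vertex lies in at most $d$ edges of each $H_j$ and there are $m$ hypergraphs, each $A_{e,i}$ is mutually independent of all but at most $kmd$ other bad events. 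A direct application of Lemma \ref{lll} with $p=m^{-k}$ and $D=kmd$ forces $m \ge (ekd)^{1/(k-1)}(1+\text{o}(1))$, which already yields $m_{\mathcal{B}_k}(d) = \text{O}(d^{1/(k-1)})$, matching the claimed bound up to the logarithmic factor.

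To recover the additional $(\ln d)^{1/(k-1)}$ saving and the leading constant $k(k-1)^{1/(k-1)}$, I would refine the random procedure into two stages: each vertex first activates with a carefully chosen probability $q \asymp (\ln d / d)^{1/(k-1)}$ and draws a uniform color only if activated; then the deferred vertices are assigned via Haxell's independent transversal theorem applied to the sparse residual subproblem. The LLL condition is reimposed on the modified bad events (each activated edge in $H_i$ must not be monochromatic in color $i$), and careful optimization of $q$ should produce exactly the target. I expect the main obstacle to be balancing these two forces at once: $q$ small enough that the bad events remain rare for LLL, yet large enough that the deferred hypergraphs become sparse enough for the transversal step to succeed.

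For the \emph{lower bound}, my plan is a clean explicit construction. Let $m = \lfloor \log_k d / (k-1) \rfloor + 1$, set $V = [k]^m$, and for each $i \in [m]$ let $H_i$ be the complete $k$-partite $k$-uniform hypergraph whose parts $V^i_1, \ldots, V^i_k$ are the fibers of the $i$-th coordinate projection $\pi_i \colon V \to [k]$. Each $H_i$ has maximum degree exactly $k^{(m-1)(k-1)} \le d$ by the choice of $m$. To verify that this family admits no cooperative coloring, suppose $\{I_1, \ldots, I_m\}$ is one. Since an independent set in the complete $k$-partite hypergraph $H_i$ must entirely miss at least one part, there exists $g(i) \in [k]$ with $I_i \cap V^i_{g(i)} = \emptyset$. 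But then the vertex $v = (g(1), g(2), \ldots, g(m)) \in V$ belongs to $V^i_{g(i)}$ for every $i$, hence to no $I_i$, contradicting that $\{I_1, \ldots, I_m\}$ partitions $V$. Consequently $m_{\mathcal{B}_k}(d) > m \ge \log_k d / (k-1)$, proving the desired inequality; the only routine piece is the degree computation, and no delicate step is expected here.
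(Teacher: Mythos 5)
Your lower bound is correct and is essentially the paper's own argument: the complete $k$-partite hypergraphs on $[k]^m$ whose parts are the coordinate fibers, the degree count $k^{(m-1)(k-1)}\le d$, and the uncovered vertex $(g(1),\dots,g(m))$ obtained from the fact that an independent set of a complete $k$-partite hypergraph must miss a part all appear in the paper in the same form.

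The upper bound, however, has a genuine gap. The only complete argument you give --- a uniform random color in $[m]$ for every vertex plus the symmetric LLL with $p=m^{-k}$ and dependency at most $kmd$ --- yields $m=O\big((kd)^{1/(k-1)}\big)$, which misses the $(\ln d)^{1/(k-1)}$ saving and the stated constant $k\,(k-1)^{1/(k-1)}$. That scheme never uses $k$-partiteness, and no generic refinement of it can reach the claimed bound without using it: for the class of all $k$-uniform hypergraphs the analogous statement is false, since taking all $m$ hypergraphs equal to a complete $k$-uniform hypergraph forces $m=\Omega(d^{1/(k-1)})$ with no logarithmic gain. Your proposed repair --- activate each vertex with probability $q\asymp(\ln d/d)^{1/(k-1)}$, color activated vertices at random, and finish the deferred vertices with Haxell's independent transversal theorem on ``the sparse residual subproblem'' --- is only a sketch, and as described it does not work: with $q\to 0$ almost every vertex is deferred, so the residual hypergraphs still have maximum degree close to $d$ and a Haxell-type step would need on the order of $d$ available indices per vertex, not $(d/\ln d)^{1/(k-1)}$; moreover Haxell's theorem is about graphs, while the residual constraints (hyperedges partially filled by activated vertices) remain hypergraph constraints. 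The paper's route is different and is driven by the partite structure: with $J_i(v)=\{j\in[m]: v\in U_j^i\}$, pigeonhole assigns each vertex to a class $W_i$ with $|J_i(v)|\ge m/k$; vertices in $W_1,\dots,W_{k-1}$ pick a uniformly random index $j\in J_i(v)$ and join $I_j$, and only afterwards does each $w\in W_k$ pick any index $j\in J_k(w)$ not blocked by an edge of $G_j$ whose other $k-1$ vertices all chose $j$. Since every edge meets the $k$-th part exactly once, this choice already guarantees independence of each $I_j$; the LLL is applied only to the events ``all of $J_k(w)$ is blocked'', whose probability is at most $d^{-4}$ once $m\ge k(1+\epsilon)\big((k-1)d/\ln d\big)^{1/(k-1)}$ (this computation is exactly where the $\ln d$ factor and the constant arise), with dependency degree $O(md^2)=O(d^3)$. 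Some mechanism of this kind --- letting the vertices attached to the last part choose after seeing the others, or another device that genuinely exploits $k$-partiteness --- is what your proposal is missing.
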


\begin{proof}
To establish the lower bound, we first consider a family of complete $k$-partite hypergraphs. 
Given $d$, take $m=\lceil \frac{\log_k d}{k-1}\rceil$. For each $j\in [m]$, we define a hypergraph $G_j$ that consists of $k$ parts denoted by $V_j^1, V_j^2, \dots, V_j^k$, where $V_j^i$ comprises the collection of $m$-dimensional vector $v$, where each component of $v$ taking a value from the set $[k]$, and where the $j$-th component $v_j$ of $v$ is specifically equal to $i$. In simpler terms,
       \begin{align*}
           V_j^i=\{v\in \{1,2,\dots,k\}^m: v_j=i\}.
       \end{align*}
The degree of any vertex in $G_j$ is  
\begin{equation*}  
 k^{(m-1)(k-1)}\leq d,  
\end{equation*}  
since each hyperedge consists of one vertex from each of the $k$ parts $V_j^1, V_j^2, \dots, V_j^k$, and the number of ways to choose $k-1$ vectors (one from each part except the one containing the fixed vertex) is $(k^{(m-1)})^{(k-1)}$.
Suppose that $I_1, I_2, \dots, I_m$ are independent sets residing respectively in $G_1, G_2, \dots, G_m$. Since each $G_j$ is a complete $k$-partite hypergraph, it follows that for each $j$, there exists a partition set $V_j^{k_j}$ with $k_j \in \{1, 2, \dots, k\}$ such that the independent set $I_j$ does not intersect with $V_j^{k_j}$, meaning $I_j \cap V_j^{k_j} = \emptyset$. Thus, the vertex $(k_1,k_2,\dots,k_m)$ is not contained in any $I_i$. It follows that $\{I_1,I_2, \dots, I_m\}$ is not a cooperative coloring of $G_1, G_2, \dots, G_m$. Consequently, 
\[M_{\mathcal{B}_k}(d)>m\ge \frac{\log_k d}{k-1}.\]

We now proceed to establish the upper bound by demonstrating the existence of a cooperative coloring for the system $\mathcal{G} = (G_1, G_2, \dots, G_m)$ of $k$-partite $k$-uniform hypergraphs on the same vertex set $V$ and having a maximum degree of $d$, provided
$\epsilon>0$ is a small number and
the number $m$ of hypergraphs  satisfies 
\[m \geq k(1 + \epsilon)\left(\frac{(k-1)d}{\ln d}\right)^{\frac{1}{k-1}},\]
which we achieve through a semi-random construction method.

For any given vertex $v \in V$ and for each integer $i \in [k]$, we define the set $J_i(v)$ as the collection of indices $j \in [m]$ such that the vertex $v$ belongs to the $i$-th partition set $U_j^i$ of the graph $G_j$. 
This is formally defined as:  
\[ J_i(v):=\{j\in [m]:~ v\in U_j^i \}.\]  

We define $W_1$ as the set of vertices $v$ in $V$ for which the size of the set $J_1(v)$ (which contains indices $j$ such that $v$ belongs to the first partition $U_j^1$ of graph $G_j$) is at least $m/k$. Formally,
\[  
W_1 := \{ v \in V : |J_1(v)| \geq \frac{m}{k} \}.  
\]  
 For each $i$ from 2 to $k$, we define $W_i$ as the set of vertices $v$ in $V$ that do not belong to any of the previously defined sets $W_1, W_2, \dots, W_{i-1}$, and for which the size of the set $J_i(v)$ (which contains indices $j$ such that $v$ belongs to the $i$-th partition $U_j^i$ of graph $G_j$) is at least $m/k$. Formally,    
\[  
W_i := \{ v \in V \setminus (W_1 \cup W_2 \cup \cdots \cup W_{i-1}) : |J_i(v)| \geq \frac{m}{k} \}  
\]    
for $i = 2, 3, \dots, k$. 
This recursive definition ensures that each vertex $v\in V$ is assigned to exactly one of the sets $W_1, W_2, \dots, W_{k}$ 
based on the least index $i$ for which $|J_i(v)| \geq \frac{m}{k} $.

Consider the following random process:  
  
\begin{enumerate}
    \item \textbf{For each vertex in $W_i$ for $i = 1$ to $k-1$}:  
        \begin{itemize}  
            \item For each vertex $w_i \in W_i$, choose an index $j = j(w_i)$ uniformly at random from the set $J_i(w_i)$. This set $J_i(w_i)$ contains indices $j$ such that $w_i$ belongs to the $i$-th partition $U_j^i$ of graph $G_j$, which has size at least $m/k$ by the definition of $W_i$.  
            \item Assign the vertex $w_i$ to the set $I_j$, where $j$ is the randomly chosen index.  
        \end{itemize}  
  
   \item \textbf{For each vertex $w_k \in W_k$}:  
        \begin{itemize}  
            \item Construct the modified set $J'_k(w_k)$ by removing from $J_k(w_k)$ any index $j$ that satisfies both of the following conditions:  
                \begin{enumerate}  
                    \item $j = j(w_1) = j(w_2) = \dots = j(w_{k-1})$ for some $w_1, w_2, \dots, w_{k-1}$ such that each $w_i \in W_i$ for $i = 1$ to $k-1$.  
                    \item The set of vertices $\{w_1, w_2, \dots, w_{k-1}, w_k\}$ forms an edge in the graph $G_j$.  
                \end{enumerate}  
                This means we exclude indices $j$ that correspond to graphs $G_j$ where the chosen indices $j(w_1), j(w_2), \dots, j(w_{k-1})$ are all the same to $j$ and the set of vertices $\{w_1, w_2, \dots, w_{k-1}, w_k\}$ forms a hyperedge in $G_j$.  
            \item Choose an index $j \in J'_k(w_k)$ arbitrarily, as long as $J'_k(w_k)$ is non-empty. 
            \item Assign the vertex $w_k$ to the set $I_j$, where $j$ is the chosen index.  
        \end{itemize}  
\end{enumerate}

Each vertex $w_i\in W_i$ with $i\in [k-1]$ is now assigned to $I_j$ for some chosen $j$ in step 1, meaning that $w_i$ belongs to the $i$-th partition $U_j^i$ of graph $G_j$.
As a result, upon completing step 1, for each chosen $j$, $I_j$ will not contain any vertex that belongs to the $k$-th partition $U_j^k$ of graph $G_j$. 
If a vertex $w_k\in W_k$ is added to $I_j$ in step 2, then $w_k$ belongs to the $k$-th partition $U_j^k$ of graph $G_j$ by its choice.
Now we are ready to show that $I_j$ is an independent set in $G_j$ for each $j\in [m]$.

For contradiction, assume that after step 2, $I_j$ contains all the vertices $w_1, w_2, \ldots, w_k$   
forming a hyperedge in $G_j$, with each $w_i$ belonging to $U_j^i$.   
Then, each $w_i$ with $i \neq k$ was chosen for $I_j$, which   
directly implies $j = j(w_1) = j(w_2) = \dots = j(w_{k-1})$.
Therefore, $\{w_1, w_2, \dots, w_{k-1}, w_k\}$ cannot form an edge in the graph $G_j$ as otherwise $j\not\in J'_k(w_k)$, a contradiction.  

To complete the main proof, it is now sufficient to show that $J'_k(w_k)$ is nonempty for all $w_k\in W_k$ with positive probability.
For a vertex $w\in W_k$, $F_{w}$ denotes the bad event that $J'_k(w)=\emptyset$. 
For each $j \in J_k(w)$, we define the event $F^j_w$ as follows:  
  \[  
F^j_w = \left\langle \exists w_1 \in W_1, \exists w_2 \in W_2, \ldots, \exists w_{k-1} \in W_{k-1} ~\bigg| ~  
\begin{array}{l}  
\{w_1, w_2, \ldots, w_{k-1}, w\} \in E(G_j) \\  
\text{and } j(w_1) = j(w_2) = \dots = j(w_{k-1}) =j 
\end{array}  
\right\rangle.  
\]  
  In other words, $F^j_w$ is the occurrence where a set of neighbors of $w$ in $G_j$, each from a different $W_i$, share the same index value $j$ (i.e.\,they are all added to $I_j$).

For any subset $\{w_1, w_2, \ldots, w_{k-1}\}$ where each $w_i\in W_i$ is a neighbor of $w$ in the graph $G_j$, the probability that all of these neighbors share the same index value $j$ can be expressed as:  
\[  
\mathbb{P}(j(w_1) = j(w_2) = \cdots = j(w_{k-1}) = j) = \prod_{i=1}^{k-1} \frac{1}{|J_i(w_i)|}.  
\]  
This probability is bounded above by $(k/m)^{k-1}$, which arises due to the fact that, according to the definition of each $W_i$,
$|J_i(w_i)|\geq m/k$. Furthermore, 
assuming $m \geq k(1+\epsilon)(\frac{(k-1)d}{\ln d})^{\frac{1}{k-1}}$, this upper bound can be restricted to:  
  \begin{align*}
   \left(\frac{k}{m}\right)^{k-1}\leq \frac{k^{k-1}}{k^{k-1}(1+\epsilon)^{k-1} (k-1)\frac{d}{\ln d}} \leq \frac{\ln d}{(1+\epsilon)(k-1)d}.
  \end{align*} 

Given $w \in W_k$, its degree in the graph $G_j$ is denoted as $t$.   
This signifies that there exist exactly $t$ hyperedges in $G_j$ that contain the vertex $w$.   
Furthermore, we let $e_{w,i}:=\{w^i_1, w^i_2, \ldots, w^i_{k-1}, w\}$ represent the $i$-th such hyperedge, where $i\in [t]$.
For each $i\in [t]$, we define the event $E_{w,i}^j$ as follows:
  \[  
E_{w,i}^j = \left\langle j(w^i_1) = j(w^i_2) = \cdots = j(w^i_{k-1}) =j 
\right\rangle.  
\]  
In the event that $F_w^j$ does not take place, it follows that for any sequence of $w_1, w_2, \ldots, w_{k-1}$ such that $\{w_1, w_2, \ldots, w_{k-1}, w\}\subseteq E(G_j)$, it is not possible for all of these $w_i$-s (where $i\in [k-1]$) to share the same index $j$ as their respective $j(w_i)$ values. Therefore,
$\overline{F^j_w}=\bigcap_{i\in [t]}{\overline{E_{w,i}^j}}.$

Let $S=\cup_{r=1}^{i-1} \{w^r_1,w^r_2,\ldots,w^r_{k-1}\}$ and $T=S\cap \{w^i_1,w^i_2,\ldots,w^i_{k-1}\}$.
If the event $\bigcup_{1 \leq l < i} E_{w,l}^j$ occurs, 
then it is possible that $j(w^i_a)=j$ for some $w^i_a\in T$.
Let $A$ be the set of all subscripts $a$ such that $j(w^i_a)=j$ and $w^i_a\in T$.
It follows that
\[\mathbb{P}(E_{w,i}^j \mid \bigcup_{1\leq l<i}E_{w,l}^j)= \prod_{q\in [k-1]\setminus A}{\frac{1}{|J_q(w_q^i)|}}\geq  \prod_{q\in [k-1]}\frac{1}{|J_q(w_q^i)|}=\mathbb{P}(E_{w,i}^j),\]
which implies
\[\mathbb{P}(\overline{E_{w,i}^j}|\bigcap_{1\leq l<i}\overline{E_{w,l}^j)}\geq  \mathbb{P}(\overline{E_{w,i}^j}).\] 
Therefore,
\begin{align*}
    \mathbb{P}(\overline{F^j_w})&= \mathbb{P}(\bigcap_{i\in [t]}{\overline{E_{w,i}^j}})
    = \prod_{i\in [t]} \mathbb{P}(\overline{E_{w,i}^j} \mid \bigcap_{1\leq l< i}{\overline{E_{w,l}^j}})\geq \prod_{i\in [t]}\mathbb{P}(\overline{E_{w,i}^j})\\
&= \prod_{i\in [t]} \bigg(1-\mathbb{P}(j(w^i_1) = j(w^i_2) = \ldots = j(w^i_{k-1}) = j)\bigg) \\
    &\geq \bigg(1-\frac{\ln d}{(1+\epsilon)(k-1)d}\bigg)^d=\exp\bigg(d \ln\big(1-\frac{\ln d}{(1+\epsilon)(k-1)d}\big)\bigg).
\end{align*}  
Utilizing Taylor series expansion, we obtain
\begin{align*}
    d \ln\bigg(1-\frac{\ln d}{(1+\epsilon)(k-1)d}\bigg)&=-\frac{\ln d}{(1+\epsilon)(k-1)}-\frac{d}{2(1-\xi(d))^2 } \bigg(\frac{\ln d}{(1+\epsilon)(k-1)d}\bigg)^2,
\end{align*}
where \[0<\xi(d)<\frac{\ln d}{(1+\epsilon)(k-1)d}\overset{d \to \infty}{\longrightarrow} 0.\]
This statement suggests that as $d$ approaches infinity, the expression    
\[  
\frac{d}{2(1-\xi(d))^2} \left(\frac{\ln d}{(1+\epsilon)(k-1)d}\right)^2  
\]   
converges to zero. Consequently, we can establish an upper bound for this expression by a constant given by   
\[  
\frac{\epsilon(1-\epsilon)}{2(1+\epsilon)}\frac{1}{(k-1)}.  
\]  
Now,
\begin{align*}
    d \ln\bigg(1-\frac{\ln d}{(1+\epsilon)(k-1)d}\bigg)
    & \geq -\frac{\ln d}{(1+\epsilon)(k-1)}-\frac{\epsilon(1-\epsilon)}{2(1+\epsilon)}\frac{1}{(k-1)}\\
     & \geq -\frac{\ln d}{(1+\epsilon)(k-1)}-\frac{\epsilon(1-\epsilon)}{2(1+\epsilon)}\frac{\ln d}{(k-1)}\\
     & = -\frac{\ln d}{k-1} \bigg(\frac{1}{1+\epsilon}+\frac{\epsilon(1-\epsilon)}{2(1+\epsilon)}\bigg)  
     = -\frac{\ln d}{k-1}\left(1-\frac{\epsilon}{2}\right)
\end{align*}
and thus 
\begin{align*}
    1-\mathbb{P}(F_w^j)
    &\geq \exp\bigg(d \ln\big(1-\frac{\ln d}{(1+\epsilon)(k-1)d}\big)\bigg)
    \geq \exp \bigg(-\frac{\ln d}{k-1}\left(1-\frac{\epsilon}{2}\right) \bigg)
    =\left(\frac{d^{\frac{\epsilon}{2}}}{d}\right)^ {\frac{1}{k-1}}\\
    &\geq \left(\frac{4^{k-1} (\ln d)^k}{(1+\epsilon)^{k-1}(k-1)d}\right)^ {\frac{1}{k-1}}
    =4k\ln d \left( \frac{1}{k^{k-1}(1+\epsilon)^{k-1}(k-1)\frac{d}{\ln d}} \right)^ {\frac{1}{k-1}}
    \ge \frac{4k\ln d}{m}.
\end{align*}
Note that we apply above the fact that
\begin{align*}
    d^{\frac{\epsilon}{2}} \geq \text{O}((\ln d)^k)
\end{align*}
as $d$ approaches infinity.

Recall that $t$ is the degree of $w$ in  $G_j$ and $e_{w,i}:=\{w^i_1, w^i_2, \ldots, w^i_{k-1}, w\}$ is the $i$-th hyperedge incident with $w$ in $G_j$, where $i\in [t]$.
If it is not the case that $j(w^i_1)=j(w^i_2)=\cdots=j(w^i_{k-1})=j$, then we say that $e_{w,i}$ is bad.
Given that $j, j_1, j_2, \ldots, j_y$ are distinct indices drawn from $J_k(w)$, and assuming the occurrence of the conditional event $E:=F_w^{j_1} \cap F_w^{j_2} \cap \dots \cap F_w^{j_y}$, it is possible to identify a subset $Z\subseteq [t]$ with the property that for every element $i\in Z$, there exists at least one vertex in $e_{w,i}\setminus \{w\}$ that is mapped to one of the indices $j_1, j_2, \ldots, j_y$ (i.e.\,added to one of the sets $I_{j_1}, I_{j_2}, \ldots, I_{j_y}$).
We take $Z$ as large as possible. 
Now, let $p_1$ represent the probability (in the absence of any preceding conditional event) of the event $E_{[t]\setminus Z}$ that all hyperedges $e_{w,i}$ with $i \in [t] \setminus Z$ are bad. Additionally, define $p_2$ as the probability of the same event $E_{[t]\setminus Z}$ occurring, but this time conditioned on the prior occurrence of the event $E$. The choice of $Z$ implies $p_1=p_2$.
Denote $E_Z$ as the event that all hyperedges $e_{w,i}$ with $i\in Z$ are bad.
It follows
\[\mathbb{P}(\overline{F^j_w} \,|\, F_w^{j_1} \cap F_w^{j_2} \cap \dots \cap F_w^{j_y})
=\mathbb{P}(E_{[t]\setminus Z}\cap E_Z  \,|\, E)
=\mathbb{P}(E_{[t]\setminus Z}  \,|\, E)
=p_2=p_1.\]
On the other hand,
\[\mathbb{P}(\overline{F^j_w})
=\mathbb{P}(E_{[t]\setminus Z}\cap E_Z)=\mathbb{P}(E_{[t]\setminus Z})\mathbb{P}(E_{Z} \,|\, E_{[t]\setminus Z})
\leq \mathbb{P}(E_{[t]\setminus Z})=p_1.\]
Therefore,
\[\mathbb{P}(\overline{F^j_w} \,|\, F_w^{j_1} \cap F_w^{j_2} \cap \dots \cap F_w^{j_y})\geq \mathbb{P}(\overline{F^j_w})~ {\rm and~ thus~}\mathbb{P}(F^j_w \,|\, F_w^{j_1} \cap F_w^{j_2} \cap \dots \cap F_w^{j_y}) \leq \mathbb{P}(F^j_w).\]
It follows
\begin{align*}
    \mathbb{P}(F_w)=\mathbb{P}(\bigcap_{j\in J_k(w)}F^j_w)\le \prod_{j\in J_k(w)} \mathbb{P}(F^j_w)\le \left(1-\frac{4k\ln d}{m}\right)^{\frac{m}{k}}\leq \exp\left(-\frac{4k\ln d}{m}\frac{m}{k}\right)=\frac{1}{d^4}.
\end{align*} 

Let $N_{G_i}(w)$ represent the set of neighbors $u$ of $w$ in $G_i$. Furthermore, define 
  \[  
S_{G_i}(w) = N_{G_i}(w) \cup \left( \bigcup_{u \in N_{G_i}(w)} N_{G_i}(u) \right)
{\rm ~and~} 
S_{\mathcal{G}}(w):= \bigcup_{i\in [m]} S_{G_i}(w).  
\]
Clearly, $|S_{\mathcal{G}}(w)|\leq m(k-1)^2d^2$.
Now, the event $F_w$ is independent of all $F_{w'}$ but those events where $w'\in S_{\mathcal{G}}(w)$.
Since $m\leq 2d$ by Proposition \ref{proposition},
\begin{align*}
    e \cdot \mathbb{P}(F_w) \cdot (|S_{\mathcal{G}}(w)|+1) \leq e \cdot \frac{1}{d^4} \cdot  (m(k-1)^2d^2+1)
                                                          \leq  e \cdot \frac{1}{d^4} \cdot  (2(k-1)^2d^3+1)
                                                          \leq  1
\end{align*}
as $d$ approaches infinity.
With the Local Lemma (Lemma \ref{lll}) at hand, we conclude
\begin{align*}
    \mathbb{P}(\bigcap_{w\in W_k} \overline{F_w})>0,
\end{align*}
as desired. 
\end{proof}

Concluding this section, we make a note that by employing the same approach utilized in proving Proposition \ref{proposition}, it can be demonstrated that:
\begin{align*}
m_{\mathcal{B}_k}(d) \leq m_{\mathcal{B}_{k-1}}(d) \leq \cdots \leq m_{\mathcal{B}_2}(d).
\end{align*}
Recall that Aharoni \textit{et al.}\,\cite{ABCHJ} had previously established in their work \cite{ABCHJ} that   
  \[  
m_{\mathcal{B}_2}(d) \leq (1+\text{o}(1))\frac{2d}{\ln d}.  
\]  
 Thus, our main contribution lies in refining the upper bound for $m_{\mathcal{B}_k}(d)$, reducing it from $\text{O}\left(\frac{d}{\ln d}\right)$ to a tighter bound of $\text{O}\left(\frac{d}{\ln d}\right)^{\frac{1}{k-1}}$.

\section{Concluding Remarks and Open Problems} \label{sec:4}

In a hypergraph, a \textit{$k$-edge} signifies a hyperedge precisely consisting of $k$ vertices. Graphs, which solely encompass 2-edges, have traditionally been employed to model numerous real-world systems. However, by integrating 3-edges or larger edges alongside 2-edges within the hypergraph framework, the model becomes not only more compatible with these existing graph-based systems but also capable of portraying additional, more intricate interaction patterns. 
This hybrid approach of utilizing hypergraphs with both 2-edges and larger edges proves advantageous when modeling relationships in real-world systems where entities engage in group interactions of varying sizes. For instance, in the context of social networks, 2-edges aptly represent simple, pairwise friendships, whereas 3-edges or larger edges offer a means to encapsulate more sophisticated relationships such as collaborative endeavors, triadic closures (a mutually exclusive friendship circle among three or more individuals), or joint group activities. Analogously, in the realm of biological networks, 2-edges serve to depict physical interactions among proteins, whereas 3-edges or larger edges facilitate the modeling of the intricate assembly of larger protein complexes.

After completing Section \ref{sec:2}, we utilize Corollary \ref{cor} derived from Theorem \ref{thm:2partition} to determine the cooperative chromatic number for various classes of graphs, specifically $k$-uniform tight cycles, loose cycles, tight paths, or loose paths. It is noteworthy that all the hypergraphs mentioned in this context share a common characteristic: they do not contain any 2-edges. Nevertheless, Theorem \ref{thm:2partition} offers a broader perspective and has the potential to yield additional insights.

We define a chain-structured system as follows:
\begin{df}
A chain-structured system is a hypergraph such that 
 the vertices can be lined up in a circle, and each hyperedge connects a sequence of consecutive vertices along this circle.
\end{df}

Upon revisiting Theorem \ref{thm:2partition}, we discover that it permits $|A_0|=|N_0|=2$. This revelation leads us to a far more universal conclusion, stated as follows:

\begin{thm}
If $\mathcal{H}_1$ and $\mathcal{H}_2$ are two isomorphic chain-structured systems on the same vertex set such that each of them contains at most one $2$-edge (with all other hyperedges having a size of at least $3$), then they possess a cooperative coloring.
\end{thm}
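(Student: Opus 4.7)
The plan is to reduce the statement directly to Theorem~\ref{thm:2partition} by encoding each chain-structured system as a family of consecutive-arc sets on its cyclic ordering of the common vertex set. Let $n=|V|$. Identify the cyclic ordering of $\mathcal{H}_1$ with $\mathbb{Z}_n$; by a rotation, arrange the labeling so that, if $\mathcal{H}_1$ contains a 2-edge, that edge is exactly $\{0,1\}$. Then list the cyclic ordering of $\mathcal{H}_2$ as a permutation $a_0,a_1,\ldots,a_{n-1}$ of $\mathbb{Z}_n$, and rotate that ordering so that, if $\mathcal{H}_2$ contains a 2-edge, it is exactly $\{a_0,a_1\}$. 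The two rotations are applied independently on the two circles, so both alignments can be achieved at once.

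Next I would define the families $\{N_i\}$ and $\{A_i\}$ required by Theorem~\ref{thm:2partition}. For each $i\in\mathbb{Z}_n$, let $N_i$ be the shortest hyperedge of $\mathcal{H}_1$ whose consecutive arc begins at position $i$, if any such hyperedge exists, and otherwise let $N_i=\{i,i+1,i+2\}$ as a harmless dummy triple. Define $A_i$ from $\mathcal{H}_2$ by the analogous recipe. The rotation step ensures $|N_0|\geq 2$ and $|N_i|\geq 3$ for every $i\neq 0$, because the only possible 2-edge has been positioned at index $0$; the same bounds hold for the $A_i$. Hence Theorem~\ref{thm:2partition} applies and delivers a partition $\{B,R\}$ of $V$ with $N_i\not\subseteq B$ and $A_i\not\subseteq R$ for every $i\in\mathbb{Z}_n$.

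It remains to check independence. For any hyperedge $e$ of $\mathcal{H}_1$ whose consecutive arc begins at position $i$, we have $N_i\subseteq e$: either $N_i$ is the shortest hyperedge starting at $i$, which lies inside every hyperedge starting at $i$ because hyperedges sharing a starting vertex form a nested chain of arcs; or $N_i$ is the dummy triple, in which case $e$ has size at least $3$ by hypothesis and therefore contains $\{i,i+1,i+2\}$. In either situation, $N_i\not\subseteq B$ forces $e\not\subseteq B$, so $B$ is independent in $\mathcal{H}_1$. The symmetric argument using the $A_i$ shows that $R$ is independent in $\mathcal{H}_2$, and $\{B,R\}$ is the desired cooperative coloring.

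The only real subtlety is the bookkeeping at index $0$: one has to verify that the size-$2$ exception in Theorem~\ref{thm:2partition} can always be made to coincide with the at-most-one actual 2-edge of each system, which is handled by the two independent rotations above. The isomorphism assumption does not play an essential role in this argument; the proof uses only that both $\mathcal{H}_1$ and $\mathcal{H}_2$ are chain-structured on a common vertex set and that each contains at most one 2-edge.
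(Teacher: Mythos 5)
Your reduction is correct and is essentially the paper's own route: the theorem is stated there as a direct consequence of Theorem \ref{thm:2partition} (exploiting that it permits $|N_0|=|A_0|=2$), and your rotations placing the optional $2$-edges at index $0$, the dummy triples at positions with no hyperedge, and the nested-arc containment argument simply make explicit the details the paper leaves implicit (as in the proof of Corollary \ref{cor}). Your closing remark that the isomorphism hypothesis plays no essential role is likewise consistent with that derivation, since Theorem \ref{thm:2partition} imposes no relation between the sizes $k_i$ and $j_i$.
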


An interesting open question is to determine the maximum number of 2-edges allowed in each chain-structured system while still preserving the validity of the aforementioned conclusion.
Presently, our understanding limits this count to a maximum of two.
Should there exist three 2-edges, it becomes feasible to construct two distinct paths, designated as 
$P_1$ and $P_2$, such that $V(P_1)=V(P_2)=\{1,2,3,4\}$, $E(P_1)=\{\{1,2\},\{2,3\},\{3,4\}\}$ and $E(P_2)=\{\{1,3\},\{3,2\},\{2,4\}\}$. One can easily see that $P_1$ and $ P_2$ do not admit a cooperative coloring.
Therefore, the following question is natural.

\begin{question}
    If $\mathcal{H}_1$ and $\mathcal{H}_2$ are two isomorphic chain-structured systems on the same vertex set such that each of them contains two $2$-edges (with all other hyperedges having a size of at least $3$), then do they possess a cooperative coloring?
\end{question}

In Section \ref{sec:3}, our focus was on $k$-partite $k$-uniform hypergraphs. One can see that there exists a gap between the lower and upper bounds for $m_{\mathcal{B}_k}(d)$ by Theorem \ref{bound}. Consequently, a compelling question arises: how can we close this gap?
From another perspective, the probabilistic proof of Theorem \ref{bound} heavily relies on the $k$-uniform characteristic of the $k$-partite hypergraph. To conclude, we pose the following question for future exploration.
\begin{question}
   Denote $\mathcal{B}^*_k$ as the class of $k$-partite hypergraphs. Does it hold that
    \[m_{\mathcal{B}^*_k}(d)\le \text{O}\left(\frac{d}{\ln d}\right)^{\frac{1}{k-1}}?\]
\end{question}
\noindent The result of Aharoni \textit{et al}.\,\cite{ABCHJ} gives a positive answer for this question when $k=2$.

\bibliographystyle{plain}
\bibliography{ref}

\end{document}